\def\co{\colon}
\newtheorem{theorem}{Theorem}[section]
\newtheorem{lemma}[theorem]{Lemma}
\newtheorem{corollary}[theorem]{Corollary}
\newtheorem{proposition}[theorem]{Proposition}
\theoremstyle{definition}
\newtheorem*{rep@theorem}{\rep@title}
\newcommand{\newreptheorem}[2]{%
\newenvironment{rep#1}[1]{%
 \def\rep@title{#2 \ref{##1}}%
 \begin{rep@theorem}}%
 {\end{rep@theorem}}}
\subjclass[2020]{58K30; 58K65, 57R45}
\title{On the Mather stability theorem for smooth maps}
\date{\today}
\author{Rustam Sadykov}
\address{Kansas State University}
\email{sadykov@ksu.edu}
\begin{document}
\begin{abstract} In \cite{MaII} Mather proved that a smooth proper infinitesimally stable map is stable. This result is the key component of the Mather stability theorem \cite{MaV}, which can be reformulated as follows: a smooth proper map $f: M\to N$ is stable if and only if it is infinitesimally stable if and only if it satisfies the Mather normal crossing condition. The latter condition,  roughly speaking, means that all map germs of $f$ are stable and $f$ maps the singular strata of $f$ to $N$ in a mutually transversal manner. In this note we adapt a short argument from the book \cite{GG} to derive the Mather stability theorem presented in \cite{MaV} from the theorem in \cite{MaII}.  
\end{abstract}
\maketitle

\section{Introduction}
Let $f\co M\to N$ be a smooth map. We say that $f$ is \emph{equivalent} to a map $f'\co M\to N$ if there is a diffeomorphism $g$ of $M$ and a diffeomorphism $h$ of $N$ such that $f=hf'g^{-1}$. We denote the space of all smooth maps of $M$ into $N$ endowed with the Whitney $C^{\infty}$ topology \cite{GG} by $C^{\infty}(M, N)$.
We say that $f$ is \emph{stable} if there exists an open neighborhood $E$ of $f$ such that every map in $E$ is equivalent to $f$.  
A smooth map germ of $f\co M\to N$ at $p$ is \emph{stable}  if for every sufficiently small neighborhood $U$ of $p$ there is a neighborhood $E$ of $f$ in $C^{\infty}(M, N)$ such that for every map $f'$ in $E$ there is a point in $U$ at which the map germ of $f'$ is right-left equivalent to the map germ of $f$ at $p$ \cite[p.11]{AVGZ}.

Given a point $p\in M$, let $\Sigma_p(f)$ denote the subset of points in $M$ at which the germ of $f$ is equivalent to the germ of $f$ at $p$.  By Theorem~\ref{th:3} below, if all map germs of $f$ are stable, then $\Sigma_p(f)$ is a submanifold of $M$. 
We say that $f$ satisfies the \emph{Mather normal crossing} condition if all map germs of $f$ are stable and for any finite collection of $s>1$ distinct points $p_1, \ldots, p_s$ in $M$ with $f(p_1)=\cdots = f(p_s)=q$ the subspaces $P_i=d_{p_i}f(T\Sigma_{p_i}(f))$ of $T_qN$ are in \emph{general position}, i.e., each space $P_i$ is transverse to the intersection of the other subspaces $P_1, \ldots, \widehat{P}_i, \ldots, P_s$. 

Stability of smooth maps was extensively studied by Mather in 
a series of papers. The main result of the second paper \cite{MaII} of the series is Theorem~\ref{th:2}, see also \cite[\S V.4]{GG}. 

\begin{theorem}[Mather] \label{th:2} If $f$ is proper and infinitesimally stable (see \S\ref{s:InfSta}), then it is stable. 
\end{theorem}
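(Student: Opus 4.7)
The plan is to follow the classical \emph{path method} of Thom and Levine. Given $f\co M\to N$ proper and infinitesimally stable, to prove stability it suffices to show that every smooth $1$-parameter family $\{f_t\}_{t\in[0,1]}$ with $f_0=f$ and $f_t$ sufficiently close to $f$ for all $t$ can be trivialized by smooth isotopies $g_t$ of $M$ and $h_t$ of $N$, meaning $h_t\circ f\circ g_t^{-1}=f_t$ with $g_0=\id,\ h_0=\id$. Differentiating this identity in $t$ reduces the problem to solving, at each time $t$, the functional equation
\[
\frac{\partial f_t}{\partial t}(x)=df_t(\xi_t(x))+\zeta_t(f_t(x)),\qquad x\in M,
\]
for time-dependent vector fields $\xi_t\in\Gamma(TM)$ and $\zeta_t\in\Gamma(TN)$ generating the desired isotopies. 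This equation is the infinitesimal stability condition, now parametrized in $t$.

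First I would pass to the relative setting by considering $F\co M\times[0,1]\to N\times[0,1]$, $F(x,t)=(f_t(x),t)$, reformulating the problem as finding a level-preserving trivialization of $F$ over the interval. Then, for each $y\in N$, I would invoke infinitesimal stability to produce a solution of the equation above on a neighborhood of $f^{-1}(y)$. Properness enters here: it guarantees $f^{-1}(y)$ is compact, so only finitely many source points contribute to the multi-germ over $y$, which makes available the finite-generation statements needed in the next step.

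The central obstacle, and the deepest ingredient of Mather's proof in \cite{MaII}, is upgrading the algebraic infinitesimal stability condition (which only controls jets) to a solution of the above equation at the level of germs of smooth functions. This requires the Malgrange preparation theorem: locally near $y\in N$, the space of germs $C^{\infty}_{f^{-1}(y)}(M)$ is finitely generated as a module over $C^{\infty}_y(N)$ via $f^*$. A Nakayama-type lemma then promotes the jet-level surjectivity provided by infinitesimal stability to germ-level surjectivity, yielding the desired $\xi_t,\zeta_t$ on a neighborhood of $f^{-1}(y)$.

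Finally I would patch the local solutions together: choose a locally finite open cover of $N$ supporting a partition of unity, pull back to $M$ via $f^*$, and assemble global vector fields $\xi_t,\zeta_t$ with supports controlled using properness. Integrating these time-dependent vector fields produces the isotopies $g_t, h_t$, completing the trivialization. The hardest step is unquestionably the application of the Malgrange preparation theorem, combined with multi-germ finite-determinacy, to pass from formal infinitesimal stability to a genuine smooth solution — this constitutes essentially all of the technical content of \cite{MaII} and is the reason the present note takes Theorem~\ref{th:2} as a black box rather than reproving it.
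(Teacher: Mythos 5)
The paper does not actually prove this statement: Theorem~\ref{th:2} is precisely the black box imported from \cite{MaII} (see also \cite[\S V.4]{GG}), and the whole point of the note is to deduce the rest of the Mather stability theorem from it. So there is no internal proof to compare yours against; what you have written is an outline of the external proof, and as an outline it correctly identifies the architecture of Mather's argument: the Thom--Levine reduction to trivializing homotopies, the passage to $F(x,t)=(f_t(x),t)$, properness giving finite multi-germs over each $y\in N$, the Malgrange preparation theorem plus Nakayama to solve the homotopy equation at the germ level, and globalization by partitions of unity.

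As a proof, however, it has two genuine gaps, and they are exactly where the difficulty of \cite{MaII} lives. First, the reduction of stability to triviality of one-parameter families is not automatic: in the Whitney $C^\infty$ topology on $C^{\infty}(M,N)$ with $M$ noncompact, an arbitrary map $f'$ near $f$ cannot in general be joined to $f$ by a continuous path inside the given neighborhood (continuous paths in this topology are locally constant outside compact sets), so ``every nearby homotopy is trivial'' does not immediately yield ``every nearby map is equivalent to $f$''; Mather has to work to bridge this. Second, to solve $\partial f_t/\partial t = df_t(\xi_t)+\zeta_t\circ f_t$ you must invoke infinitesimal stability of $f_t$ (or of the family $F$), not of $f_0=f$; the persistence of infinitesimal stability under small perturbation is itself a substantial theorem (it rests on the equivalence with transversality of the jet extension to the orbits, i.e.\ on Theorem~\ref{th:3} and its multi-germ analogue), and your sketch uses it silently when it ``invokes infinitesimal stability'' for the time-$t$ equation. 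Since you explicitly present this as an explanation of why the theorem is taken as a black box rather than as a self-contained argument, that is consistent with the paper's treatment --- but it should not be mistaken for a proof.
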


Theorem~\ref{th:1} below is a reformulation (see Proposition~\ref{prop:3.3}) of the Mather stability theorem, which is one of the results in the fifth paper \cite{MaV} of the series. We will adapt a short argument from the proof of \cite[Theorem VII.6.4]{GG} to deduce Theorem~\ref{th:1} from Theorem~\ref{th:2}. 

\begin{theorem}[Mather stability theorem] \label{th:1}  Let $f\co M\to N$ be a proper map of a manifold $M$. Then the following three conditions are equivalent:
\begin{itemize} 
\item $f$ is stable, 
\item $f$ is infinitesimally stable, 
\item $f$ satisfies the Mather normal crossing condition. 
\end{itemize}
\end{theorem}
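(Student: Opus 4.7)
The plan is to establish the three implications $(2) \Rightarrow (1)$, $(1) \Rightarrow (2)$, and $(2) \Leftrightarrow (3)$, which together give the equivalence of all three conditions. The first, $(2) \Rightarrow (1)$, is exactly Theorem~\ref{th:2} and requires no further work.

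For $(1) \Rightarrow (2)$, I would use the standard one-parameter deformation argument. Given a vector field $\xi$ along $f$ (a section of $f^*TN$), fix a Riemannian metric on $N$ and form the family $f_t(x) = \exp_{f(x)}(t\xi(x))$, which for small $t$ lies in any prescribed neighborhood of $f$. Stability of $f$ provides smooth one-parameter families of diffeomorphisms $g_t$ of $M$ and $h_t$ of $N$ with $g_0 = h_0 = \id$ and $h_t \circ f = f_t \circ g_t$; smoothness in $t$ follows from an openness argument in the space of equivalences as in \cite[\S V]{GG}. Differentiating at $t = 0$ yields $\xi = df(\partial_t g_t|_{t=0}) + (\partial_t h_t|_{t=0}) \circ f$, which, as $\xi$ ranges over all vector fields along $f$, is precisely infinitesimal stability.

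The substantive part is the equivalence $(2) \Leftrightarrow (3)$, where I would adapt the argument of \cite[Theorem VII.6.4]{GG}. Infinitesimal stability of $f$ is equivalent, via a multi-jet reformulation, to infinitesimal stability of every multi-germ of $f$ at each finite subset $S = \{p_1, \ldots, p_s\}$ of a single fiber $f^{-1}(q)$. The plan is to show that such a multi-germ is infinitesimally stable if and only if each single germ at $p_i$ is infinitesimally stable and the subspaces $P_i = d_{p_i}f(T_{p_i}\Sigma_{p_i}(f))$ lie in general position in $T_qN$. Combined with Theorem~\ref{th:2}, which promotes infinitesimal stability of a single germ to stability, this is exactly the Mather normal crossing condition.

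The main obstacle is identifying, in the multi-germ setting, the tangent space to the orbit of the germ of $f$ at $p_i$ under the diffeomorphism group action with the image of $T_{p_i}\Sigma_{p_i}(f)$ inside the appropriate jet space. This rests on Theorem~\ref{th:3}, which guarantees that $\Sigma_{p_i}(f)$ is a submanifold, together with a careful separation of the contributions of diffeomorphisms of $M$, which act independently at each $p_i$, from those of diffeomorphisms of $N$, which act on the $s$-tuple in $(T_qN)^s$ diagonally via a single vector in $T_qN$. Once these tangent spaces are in hand, surjectivity of the infinitesimal action onto $\prod_i T_qN$ unpacks precisely to the condition that each $P_i$ is transverse to the intersection of the remaining $P_j$'s, i.e.\ general position.
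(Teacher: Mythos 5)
Your step $(1)\Rightarrow(2)$ has a genuine gap. Stability only asserts that each $f_t$ in your deformation is equivalent to $f$, i.e.\ for each small $t$ \emph{some} pair $(g_t,h_t)$ exists with $h_t\circ f= f_t\circ g_t$; it gives no canonical choice and no control whatsoever on how $(g_t,h_t)$ varies with $t$ --- not smoothness, not even continuity. The ``openness argument in the space of equivalences'' you invoke is not in \cite{GG}; indeed the entire difficulty that Mather's work resolves is that one cannot simply differentiate a path of equivalences, and \cite{GG} deliberately routes the implication from stability to infinitesimal stability through (multi)jet transversality instead: a stable map can be perturbed (Thom transversality) to one whose multijet extension is transverse to the orbits, and transversality is inherited back by $f$ because equivalent maps have diffeomorphic jet-orbit incidence (\cite[Lemma V.5.7]{GG}). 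This is exactly what the paper does: it proves the cycle $(1)\Rightarrow(3)\Rightarrow(2)\Rightarrow(1)$, obtaining $(1)\Rightarrow(3)$ by quoting the Multijet Transversality Theorem together with \cite[Lemma 4.6.5]{Wa}, so that the delicate direction out of stability never requires differentiating a family of diffeomorphisms. You should replace your $(1)\Rightarrow(2)$ by $(1)\Rightarrow(3)$ along these lines.

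Your $(2)\Leftrightarrow(3)$ outline is in the right spirit but stays at the level of tangent spaces to orbits in a finite-dimensional jet space, whereas infinitesimal stability is a surjectivity statement for modules of germs of vector fields ($tf(B)+wf(A)=C$). Passing from surjectivity onto $\prod_i T_qN$ (the $0$-jet level) to surjectivity at the level of germs is precisely where the Malgrange preparation theorem enters, in the form of Mather's lemma: $tf(B)+wf(A)+f^*(\mathfrak m_q)C=C$ implies $tf(B)+wf(A)=C$. The paper's \S 4 makes this concrete: it uses general position of the $P_\ell$ to build coordinates at $q$ adapted to all the branches simultaneously, puts each germ in the normal form \eqref{eq:normal_form}, reduces the multigerm infinitesimal stability equations to the system \eqref{eq:m7} modulo $f^*(\mathfrak m_q)$, and then solves \eqref{eq:m7} by distributing the target coordinates among the branches --- the disjointness of the index sets $I_\ell$, which is exactly general position, is what makes the assignment of the $\tilde v_k$ consistent. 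Without the preparation-theorem step your ``unpacking'' of surjectivity only proves a statement about first-order jets, not infinitesimal stability.
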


We are particularly interested in the case of Morin maps. 

\begin{corollary}\label{c:1}  A Morin map of a compact manifold is stable if and only if it satisfies the normal crossing condition. 
\end{corollary}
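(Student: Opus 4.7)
The plan is to deduce the corollary immediately from Theorem~\ref{th:1}, after observing that for Morin maps the stability hypothesis built into the Mather normal crossing condition is automatic. First I would recall the definition of a Morin map: a smooth map $f\co M\to N$ whose germ at every point is right-left equivalent to one of the standard $A_k$ normal forms. It is a classical fact (going back to Morin's original work, and covered in \cite{GG}) that each $A_k$ germ is stable. Consequently, the clause "all map germs of $f$ are stable" in the definition of the Mather normal crossing condition is automatically satisfied for any Morin map.

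Next I would dispose of the properness hypothesis: since $M$ is compact, $f$ is automatically proper, and therefore Theorem~\ref{th:1} applies. Under these circumstances the Mather normal crossing condition reduces to the transversality requirement on the strata, namely that for every finite collection of distinct points $p_1,\ldots,p_s\in f^{-1}(q)$ the subspaces $d_{p_i}f(T\Sigma_{p_i}(f))\subset T_qN$ are in general position. This is precisely what is called the \emph{normal crossing condition} in the standard terminology for Morin maps (cf.~the formulation used throughout the literature on Morin theory).

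With these observations in place, the corollary follows from the equivalence in Theorem~\ref{th:1}: $f$ is stable if and only if it satisfies the Mather normal crossing condition, which for a Morin map is the same as the normal crossing condition. I expect no real obstacle here—the only point worth being careful about is confirming that the submanifold $\Sigma_{p_i}(f)$ used in the transversality statement coincides with the usual stratum of $A_k$ points through $p_i$, which follows from Theorem~\ref{th:3} invoked in the paper and from the fact that Morin germs of a fixed Thom–Boardman type are mutually right-left equivalent.
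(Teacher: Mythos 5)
Your proposal is correct and is essentially the argument the paper intends: the corollary is an immediate specialization of Theorem~\ref{th:1}, using that compactness gives properness and that Morin ($A_k$) germs are stable, so the Mather normal crossing condition reduces to the usual normal crossing condition on the strata. The paper offers no further proof beyond this deduction, so there is nothing to add.
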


Corollary~\ref{c:1} is proved in the book \cite{GG} by Golubitsky and Guillemin in the case where  $\dim M =\dim N\le 4$, and  in the case of fold maps when $\dim M\ge \dim N$, see \cite[Theorem III.4.4]{GG}. 
In the case where the dimension of the manifold $N$ is $2$, Corollary~\ref{c:1} is proved in the book \cite[Theorem 4.7.6]{Wa} by Wall. Corollary~\ref{c:1} is a starting point in studying global topology of singularities of maps of low dimensional manifolds, e.g., see the book \cite{Sa} on maps of $4$-manifolds into $3$-manifolds. 

In \S\ref{s:2} and \S\ref{s:InfSta} we respectively review the notions of general position of subspaces, and infinitesimally stable maps and map germs. Readers well familiar with the book \cite{GG} may skip the review and proceed to \S\ref{s:4}, where we present the mentioned adapted short argument from \cite{GG}.

\section{Mutually transversal subspaces}\label{s:2}


Recall that for $s>1$ subspaces $P_1,\ldots, P_s$ of a vector space $Q$ are \emph{mutually transversal} or \emph{in general position} if every space $P_i$ is transverse to the intersection of the others, i.e., $Q= P_i + \cap_{j\ne i} P_j$. 

\begin{lemma}\label{l:1.1} The following conditions are equivalent:
\begin{itemize}
\item[(1)] The spaces in the collection $\{P_i\}$ are in general position.
\item[(2)] The diagonal map $\Delta^\bullet\co Q\to \oplus_i (Q/P_i)$ is surjective. 
\item[(3)] The diagonal map  $\Delta\co Q\to \oplus_i Q$ is transverse to $\oplus_i P_i$, i.e., $\oplus_i Q=\mathop\mathrm{Im} 	 	(\Delta)+ \oplus_i P_i$. 
\item[(4)] Each subcollection $\{P_{i_j}\}$ of the collection $\{P_i\}$ is in general position in $Q$.  
\item[(5)] Given $v_1,\ldots, v_s\in Q$, there is $z\in Q$ such that $v_i-z$ is in $P_i$ for all $i$. 
\end{itemize}
\end{lemma}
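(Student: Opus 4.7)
The plan is to prove a short cycle of equivalences in which all but one step is essentially a definition chase; the only real work will go into proving (1) $\Rightarrow$ (5) by induction on $s$. I would first dispatch the formal equivalences. For (2) $\Leftrightarrow$ (3): the map $\Delta^\bullet$ factors as $\Delta$ composed with the canonical projection $\oplus_i Q \to \oplus_i(Q/P_i)$, whose kernel is exactly $\oplus_i P_i$, so surjectivity of $\Delta^\bullet$ is literally the condition $\oplus_i Q = \mathop\mathrm{Im}(\Delta) + \oplus_i P_i$. For (2) $\Leftrightarrow$ (5): since $\Delta^\bullet(z) = (z + P_1, \ldots, z + P_s)$, surjectivity onto an arbitrary $(v_1 + P_1, \ldots, v_s + P_s)$ is precisely the existence of some $z \in Q$ with $v_i - z \in P_i$ for every $i$.

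Next, (1) $\Leftrightarrow$ (4) is a monotonicity observation: (4) $\Rightarrow$ (1) is trivial, and for (1) $\Rightarrow$ (4) I would iterate the fact that dropping one subspace preserves general position, because passing to a subcollection can only enlarge the intersection $\cap_{j \neq i} P_j$, hence only enlarge the sum $P_i + \cap_{j \neq i} P_j$. For (5) $\Rightarrow$ (1), given $v \in Q$ and an index $i$, I would apply (5) to the tuple with $v_i = v$ and $v_j = 0$ for $j \neq i$: the resulting $z$ lies in $\cap_{j \neq i} P_j$, while $v - z \in P_i$, so $v = (v - z) + z$ exhibits $v \in P_i + \cap_{j \neq i} P_j$.

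The heart of the lemma is (1) $\Rightarrow$ (5), which I plan to prove by induction on $s$. The base $s = 2$ is immediate: from $P_1 + P_2 = Q$, write $v_1 - v_2 = p_1 + p_2$ with $p_i \in P_i$ and take $z = v_1 - p_1$. For the inductive step, use the already-established (1) $\Rightarrow$ (4) to deduce that $\{P_1, \ldots, P_{s-1}\}$ again satisfies (1), so by induction there is $z' \in Q$ with $v_i - z' \in P_i$ for all $i < s$; then replace $z'$ by $z = z' + w$, where $w \in \cap_{j < s} P_j$ is chosen so that $(v_s - z') - w \in P_s$. I expect this simultaneous correction step — adjusting for the $s$-th constraint without disturbing the earlier memberships — to be the main obstacle, but the existence of such a $w$ is exactly the decomposition of $v_s - z'$ supplied by condition (1) at the index $i = s$, so the induction closes.
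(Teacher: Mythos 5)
Your proposal is correct. The formal equivalences (2)$\Leftrightarrow$(3), (2)$\Leftrightarrow$(5), (1)$\Leftrightarrow$(4) and the implication (5)$\Rightarrow$(1) are handled exactly as in the paper (the paper phrases (2)$\Rightarrow$(1) in terms of cosets, but it is the same test-vector argument you use). The one substantive difference is in the hard direction. Where you prove (1)$\Rightarrow$(5) by induction on $s$, first invoking (1)$\Rightarrow$(4) to handle $\{P_1,\ldots,P_{s-1}\}$ and then correcting $z'$ by an element $w\in\cap_{j<s}P_j$, the paper gives a one-shot explicit construction of the preimage: decompose each $v_i=p_i+y_i$ with $p_i\in P_i$ and $y_i\in\cap_{j\ne i}P_j$, and take $z=y_1+\cdots+y_s$; this works because each $y_k$ with $k\ne i$ already lies in $P_i$, so $v_i-z=p_i-\sum_{k\ne i}y_k\in P_i$. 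The paper's version is shorter and makes the mechanism visible in a single formula, and it does not need (4) as a prerequisite; your induction is equally valid and arguably makes the ``simultaneous correction'' issue more explicit, at the cost of routing the logic through (4) first. Both arguments rest on the same decomposition $Q=P_i+\cap_{j\ne i}P_j$, so the difference is one of packaging rather than of ideas.
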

\begin{proof}   To show that $(1)$ implies $(2)$, suppose $Q=P_i + \cap_{j\ne i} P_j$ for each $i$.  Then every element in $\oplus_i (Q/P_i)$ can be represented by $(y_1+ P_1, \ldots, y_s+P_s)$ with $y_i\in \cap_{j\ne i} P_j$. 
Then $\Delta^\bullet(y_1+\cdots +y_s)=(y_1+ P_1, \ldots, y_s+P_s)$ since for $k\ne i$ vectors $y_k$ are in $\cap_{j\ne k} P_j\subset P_i$. Thus $\Delta^\bullet$ is surjective. Conversely, suppose $\Delta^\bullet$ is surjective. Then for any $x\in Q$ there is $p_i\in Q$ such that the $i$-th component of $\Delta^\bullet(p_i)$ is $0$, and the $j$-th component of $\Delta^\bullet(p_i)$ is $x+ P_j$ for all $j\ne i$. In other words, every element $x\in Q$ is a sum of $p_i\in P_i$ and the element $x-p_i$ in $\cap_{j\ne i}P_j$. Thus $(2)$ implies $(1)$. 
 The statements $(2)$ and $(3)$ are equivalent as well in view of the short exact sequence:
 \[
  0\longrightarrow \oplus_i P_i \longrightarrow \oplus_i Q\longrightarrow \oplus_i (Q/P_i) \longrightarrow 0. 
 \]    
 
 Suppose now that the spaces $\{P_i\}$ are in general position, i.e., $Q= P_i + \cap_{j\ne i} P_j$. Then $Q = P_i + \cap_{j\in J\setminus\{i\}} P_j$ for each subset $J$ of indices $\{1,\ldots, s\}$. 
 Thus, $(1)$ and $(4)$ are equivalent. The equivalence of $(2)$ and $(5)$ is obvious. 
\end{proof}

\section{Infinitesimally stable maps}\label{s:InfSta}

Given a smooth map $f\co M\to N$, a \emph{vector field along} $f$ is a map $w\co M\to TN$ such that $\pi_N\circ w = f$, where $\pi_N\co TN\to N$ is the canonical projection. We say that a smooth map $f\co M\to N$ is \emph{infinitesimally stable} if for every vector field $w$ along $f$, there is a vector field $u$ on $M$ and a vector field $v$ on $N$ such that  $w=df(u)+ v(f)$, where $v(f)$ stands for the composition $M\xrightarrow{f} N\xrightarrow{v} TN$. The definition of an \emph{infinitesimally stable map germ} is obtained from this definition by replacing maps and vector fields with map germs and germ vector fields respectively. We also say that the map germ of $f$ at $p$ is \emph{transverse stable} if for sufficiently big $k$ the $k$-jet extension $j^kf$ of $f$ is transverse at $p$ to the orbit of $j^kf(p)$ under the action of right-left changes of coordinates.

\begin{theorem}[Mather]\label{th:3} The following conditions on a map germ of $f\co M\to N$ at a point $p$ are equivalent:
\begin{itemize}
\item The map germ of $f$ at $p$ is stable. 
\item The map germ of $f$ at $p$ is infinitesimally stable.
\item The map germ of $f$ at $p$ is transverse stable. 
\end{itemize}
\end{theorem}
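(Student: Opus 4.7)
The plan is to prove the equivalence in the cycle: infinitesimally stable $\Longleftrightarrow$ transverse stable, then stable $\Longrightarrow$ infinitesimally stable, and finally infinitesimally stable $\Longrightarrow$ stable. The first equivalence is a tangent-space computation in jet space combined with Mather's finite determinacy; the last implication is the substantive analytic content and will be the main obstacle.

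For infinitesimally stable $\Longleftrightarrow$ transverse stable, the key observation is that the tangent space at $j^kf(p)$ to the orbit of the right-left action on the fibre of $J^k(M,N)\to M\times N$ over $(p,f(p))$ equals the image modulo $\mathfrak{m}_p^{k+1}$ of the infinitesimal action map $(u,v)\mapsto df(u)+v\circ f$. Combining this with the action on the base $M\times N$, transversality of $j^kf$ at $p$ to the full $\mathcal{A}$-orbit in $J^k(M,N)$ unwinds to the condition that the infinitesimal action map is surjective modulo $\mathfrak{m}_p^{k+1}$. The direction (infinitesimally stable)$\Longrightarrow$(transverse stable) is then immediate by reducing modulo $\mathfrak{m}_p^{k+1}$. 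The converse uses the Malgrange preparation theorem in the form given in \cite[\S VI]{GG}: for $k$ large enough, surjectivity modulo $\mathfrak{m}_p^{k+1}$ lifts to surjectivity at the germ level, which is infinitesimal stability.

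For stable $\Longrightarrow$ infinitesimally stable at the germ level, given a germ vector field $w$ along $f$ at $p$, extend $f$ and $w$ to a representative on an open neighborhood and form a smooth one-parameter family $f_t$ with $f_0=f$ and $\partial_t f_t|_{t=0}=w$ (for example by composing $f$ with the time-$t$ flow of an extension of $w$ in $N$). Stability of $f$ supplies, for $t$ small, diffeomorphism germs $g_t$ of $M$ at $p$ and $h_t$ of $N$ at $f(p)$ with $g_0=\id$, $h_0=\id$ and $f_t=h_t\circ f\circ g_t^{-1}$; differentiating at $t=0$ produces the infinitesimal decomposition $w=df(u)+v(f)$ with $u=-\partial_tg_t|_0$ and $v=\partial_th_t|_0$.

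The hard direction is infinitesimally stable $\Longrightarrow$ stable, and this is where I would adapt the homotopy method that underlies Theorem~\ref{th:2}. Given a nearby germ $f'$, form the straight-line homotopy $f_t=(1-t)f+tf'$. Infinitesimal stability is an open condition on germs, so each $f_t$ is infinitesimally stable near $p$. I would then solve the time-dependent transport equation
\[
\partial_t f_t = df_t(u_t) + v_t(f_t)
\]
for germ vector fields $u_t$ on $M$ and $v_t$ on $N$, smoothly in $t$. The existence of such $(u_t,v_t)$ at each fixed $t$ is exactly the infinitesimal stability of $f_t$; the main difficulty is obtaining a smooth-in-$t$ solution. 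This is handled by applying the parametrized version of the Malgrange preparation theorem, as in \cite[\S V.4]{GG}, to the relative infinitesimal action map over the interval. Integrating the resulting time-dependent vector fields $u_t,v_t$ produces germ isotopies $g_t,h_t$ of source and target with $f_t=h_t\circ f\circ g_t^{-1}$, yielding at $t=1$ the required equivalence between $f$ and $f'$ and hence stability. The whole argument is essentially the germ version of the proof of Theorem~\ref{th:2}, and the preparation-theorem step is the sole real obstacle.
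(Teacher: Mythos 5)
Your step ``stable $\Longrightarrow$ infinitesimally stable'' contains a genuine gap, and it is exactly the classical pitfall that the standard proofs are designed to avoid. Stability gives you, for each small $t$, \emph{some} equivalence between $f_t$ and $f$ --- but (i) the equivalence is a germ equivalence at some point $p_t\in U$ which need not equal $p$ and is not known to vary continuously in $t$, and (ii) even if the base point were fixed, the existence of $g_t,h_t$ for each separate $t$ does not produce a family that is differentiable (or even continuous) in $t$. Without a $C^1$ family you cannot write $u=-\partial_t g_t|_0$, $v=\partial_t h_t|_0$, so the infinitesimal decomposition $w=df(u)+v(f)$ does not follow. The paper circumvents this by proving instead that stable implies \emph{transverse} stable: perturb $f$ to an $f'$ whose $k$-jet extension is transverse to the orbit of $j^kf(p)$ (Thom jet transversality), use stability to find a point where the germ of $f'$ is equivalent to that of $f$ at $p$, and then invoke \cite[Lemma V.5.7]{GG}, which says that transversality to an $\mathcal{A}^k$-orbit is preserved under germ equivalence and under passing to the original map; no smooth dependence of equivalences is ever needed. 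Transverse stable then implies infinitesimally stable by \cite[Theorem V.5.13]{GG} (your tangent-space computation plus Malgrange preparation is essentially that proof), closing the cycle the other way around.

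Your remaining two arrows are acceptable in outline. The equivalence of infinitesimal and transverse stability via the tangent space to the $\mathcal{A}^k$-orbit and the preparation theorem is the argument of \cite[\S V.5]{GG}, and your homotopy-method sketch of ``infinitesimally stable $\Longrightarrow$ stable'' for germs is the standard route (the paper simply cites \cite[\S 7.3]{AVGZ} for it); note only that the openness of infinitesimal stability of germs, which you use to make every $f_t$ infinitesimally stable, is itself most naturally obtained from the transverse-stability characterization, so the logical order of your cycle matters there. The one step you must repair is the differentiation argument.
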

\begin{proof}  If the map germ $[f]=[f]_p$ of $f$ at $p$ is stable, then it is transverse stable. Indeed, by the Thom's jet transversality theorem  there is a map $f'$ close to $f$ in $C^\infty(M,N)$ whose jet extension is transverse in $J^k(M, N)$ to the orbit of $j^kf(p)$. If $f'$ is sufficiently close to $f$, then in view of stability of $f$ at $p$, the map germ $[f']$ of $f'$ at some point is equivalent to the map germ $[f]$ of $f$ at $p$. Therefore the jet extension of $f$ is also transverse at $p$ to the orbit of $j^kf(p)$, see \cite[Lemma V.5.7]{GG}. Every transverse stable map germ is infinitesimally stable by \cite[Theorem V.5.13]{GG}. Finally, every infinitesimally stable map germ is stable by \cite[\S 7.3]{AVGZ}. 
\end{proof}

Now a short argument in \cite[Lemma 4.6.5]{Wa}, combined with Theorem~\ref{th:3}, shows that Theorem~\ref{th:1} is indeed a reformulation of the original Mather stability theorem, see \cite[Theorem 4.1]{MaV}. 

\begin{proposition}\label{prop:3.3}
	The third condition in the original Mather stability theorem is equivalent to the third condition in Theorem~\ref{th:1}. 
\end{proposition}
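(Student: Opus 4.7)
The plan is to recognize the condition in the original formulation of \cite[Theorem 4.1]{MaV} as asserting that every multi-germ of $f$ at a tuple $(p_1,\ldots,p_s)$ with common image $q$ is infinitesimally stable (equivalently, stable, by the multi-germ analogue of Theorem~\ref{th:3}), and then to show that multi-germ infinitesimal stability decomposes precisely into the two clauses of the Mather normal crossing condition. The short argument of \cite[Lemma 4.6.5]{Wa} provides the template; I would translate it into the notation of this note.

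First, I would unpack multi-germ infinitesimal stability at $(p_1,\ldots,p_s)$ with $f(p_i)=q$ as surjectivity of
\[
df\oplus\omega\co\bigoplus_i\theta(M)_{p_i}\oplus\theta(N)_q\longrightarrow\bigoplus_i\theta(f)_{p_i},
\]
where $\theta(M)_{p_i}$, $\theta(N)_q$, $\theta(f)_{p_i}$ are the germs of vector fields at $p_i$, at $q$, and along $f$ at $p_i$, and $\omega(v)$ is the tuple whose $i$-th entry is $v\circ f$ at $p_i$. Projecting onto a single summand recovers individual infinitesimal stability at each $p_i$, which by Theorem~\ref{th:3} is the same as stability of each germ of $f$. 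This matches the first clause of the Mather normal crossing condition.

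Assuming each individual germ is infinitesimally stable, I would quotient each summand by $df(\theta(M)_{p_i})$ to rephrase multi-germ infinitesimal stability as surjectivity of the diagonal map
\[
\theta(N)_q\longrightarrow\bigoplus_i\theta(f)_{p_i}/df(\theta(M)_{p_i}).
\]
The key technical step, following \cite{Wa}, is to identify each quotient $\theta(f)_{p_i}/df(\theta(M)_{p_i})$ as a $\theta(N)_q$-module (via $\omega$) with the finite-dimensional vector space $T_qN/P_i$, where $P_i=d_{p_i}f(T_{p_i}\Sigma_{p_i}(f))$. Individual infinitesimal stability and Mather's preparation theorem make this quotient a finitely generated $\theta(N)_q$-module concentrated at $q$, so the constant vector fields at $q$ cover it; the kernel of this cover is the tangent space to the right-left orbit of the germ of $f$ at $p_i$, which equals $P_i$ by the very definition of the stratum $\Sigma_{p_i}(f)$.

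With this identification, the reformulated diagonal condition becomes surjectivity of $T_qN\to\bigoplus_iT_qN/P_i$, which by the equivalence of (1) and (2) in Lemma~\ref{l:1.1} is precisely the general position of $\{P_i\}$, i.e., the second clause of the Mather normal crossing condition. The hard part will be Step 3: unpacking that the $\theta(N)_q$-module $\theta(f)_{p_i}/df(\theta(M)_{p_i})$ is in fact a finite-dimensional vector space isomorphic to $T_qN/P_i$, since this packages both Mather's preparation theorem and the interpretation of $\Sigma_{p_i}(f)$ as the right-left orbit through $p_i$. Once this identification is in place, everything else reduces to applying Lemma~\ref{l:1.1}.
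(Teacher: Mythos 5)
Your overall strategy --- read Mather's third condition as stability of all multi-germs, peel off the single-germ conditions, and reduce what remains to condition (2) of Lemma~\ref{l:1.1} --- has the right shape and is close to the paper's route (the paper works with multi-jet transversality to orbits rather than multi-germ infinitesimal stability, citing \cite[Lemma 4.6.5]{Wa} for the decomposition and Theorem~\ref{th:3} for the single-germ part). But Step 3, which you correctly single out as the crux, is false as stated: the quotient $\theta(f)_{p_i}/df(\theta(M)_{p_i})$ is in general an infinite-dimensional real vector space, is not concentrated at $q$, and is not covered by the constant vector fields. Take the cusp germ $f(x,t)=(x^3+tx,\,t)$ at the origin. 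Sending $(w_1,w_2)$ to $w_1-xw_2$ modulo the ideal $(3x^2+t)$ identifies $\theta(f)_0/df(\theta(M)_0)$ with $C^\infty(\R^2)_0/(3x^2+t)\cong C^\infty(\R)_0$, which is infinite-dimensional; multiplication by $y_2$ acts as multiplication by $-3x^2$, which is not nilpotent, so the module is not supported at $q$; and the constant vector fields hit only the two-dimensional subspace spanned by $1$ and $x$.

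The missing ingredient is exactly Mather's lemma (Malgrange preparation plus Nakayama) as recalled in \S\ref{s:InfSta}: surjectivity of $\theta(N)_q\to\bigoplus_i\theta(f)_{p_i}/df(\theta(M)_{p_i})$ is a statement about finitely generated $C(N)_q$-modules and may therefore be tested after further quotienting by the image of $\mathfrak{m}_q\theta(N)_q$. Only the resulting quotients $\theta(f)_{p_i}/\bigl(df(\theta(M)_{p_i})+\omega(\mathfrak{m}_q\theta(N)_q)\bigr)$ are finite-dimensional and identifiable with $T_qN/P_i$ (in the cusp example one quotients further by $(x^2)$ and is left with the span of $1$ and $x$, of dimension $2=\dim T_qN/P$). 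This reduction modulo $\mathfrak{m}_q$ is precisely the passage from system (\ref{eq:m3a}) to the reduced system (\ref{eq:m7}) in \S\ref{s:4}, so your argument cannot bypass it. You also still owe a proof that the kernel of $T_qN\to\theta(f)_{p_i}/\bigl(df(\theta(M)_{p_i})+\omega(\mathfrak{m}_q\theta(N)_q)\bigr)$ is $P_i=d_{p_i}f(T\Sigma_{p_i}(f))$; ``by the very definition of the stratum'' is not enough, since $\Sigma_{p_i}(f)$ is defined by germ equivalence, and relating its tangent space to this algebraic kernel is the content of the normal form (\ref{eq:normal_form}), equivalently of \cite[Lemma 4.6.5]{Wa}. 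With those two repairs your outline becomes a correct, slightly more algebraic rendering of the same argument.
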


\begin{proof}[Sketch of the proof.]	
	Let $f$ be a smooth proper map of $M$ into $N$, and $\pi^k\co J^k(M,N)\to M$ denote the canonical projection. The third condition in the original Mather stability theorem asserts that
	\begin{itemize}
		\item given $r\ge \dim N+1$, and $k\ge \dim N$, 
		$_rj^kf$ is transverse to every orbit in $_rJ^k(M, N)$. 
	\end{itemize}
	Here the \emph{multi-jet space} $_rJ^k(M,N)$ is the space of $r$-tuples $(z_1,\ldots, z_r)$ of $k$-jets of map germs with $\pi^k(z_i)\ne \pi^k(z_j)$ for all $i\ne j$ in $\{1,\ldots,r\}$. An \emph{orbit} in $_rJ^k(M,N)$ is the orbit of the standard action of ${\mathop\mathrm{Diff}}^kM\times {\mathop\mathrm{Diff}}^kN$ on the multi-jet space, see \cite{MaV}.
	Let $\Delta\subset M^r$ denote the subspace of tuples $p_1,\ldots, p_r$ of points in $M$ with $p_i=p_j$ for all $i\ne j$. The \emph{multi-jet section} $_rj^kf\co M^r\setminus \Delta\to\! _rJ^k(M,N)$ is defined by associating the $r$-tuple $(j^kf(p_1), \ldots, j^kf(p_r))$ of $r$-jets to a tuple $\mathbf{p}=(p_1,\ldots, p_r)$ of distinct $r$ points in $M$. A short argument (e.g., see \cite[Lemma 4.6.5]{Wa}) shows that $_rj^k f$ is transverse at $\mathbf{p}$ to the orbit of $_rj^k f(\mathbf{p})$ in the multi-jet space if and only if $j^kf$ is transverse to the orbit of $j^kf(p_i)$ at $p_i$ for each $i=1,\ldots, r$, and the subspaces $d_{p_i}f(T\Sigma_{p_i}(f))$ are in general position. Finally, by Theorem~\ref{th:3}, the jet section $j^kf$ is transverse at $p_i$ to the orbit of $j^kf(p_i)$ if and only if the map germ of $f$ at $p_i$ is stable.  
\end{proof}

Let $q$ be a point in $N$, and $S=\{p_1,\ldots, p_s\}$ a subset of $f^{-1}(q)$. We say that $f$ is \emph{infinitesimally stable at $S$} if for any germ vector fields $w_1,\ldots, w_s$ along $f$ at $p_1, \ldots, p_s$, there are germ vector fields $u_1, \ldots, u_s$ on $M$ at $p_1, \ldots, p_s$ and a germ vector field $v$ on $N$ at $q$ such that  
\begin{equation}\label{eq:stab}
w_k=df(u_k)+v(f)
\end{equation}
for each $k=1,\ldots, s$.

\begin{theorem}\label{th:inf}  Let $f\co M\to N$ be a proper smooth map. If for every point $q\in N$, the map $f$ is infinitesimally stable at every finite subset $S$ of $f^{-1}(q)$, then $f$ is infinitesimally stable. 
\end{theorem}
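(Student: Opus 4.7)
The approach is to build the global fields $(u,v)$ by gluing local solutions via a partition of unity on $N$. The structural point to exploit is that $v$ is pulled back from $N$ before being plugged into the equation, so a partition of unity on $N$ is naturally compatible with the equation $w=df(u)+v(f)$.

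\textbf{Step 1 (gluing via partition of unity on $N$).} Suppose that for every $q\in N$ one can produce an open neighbourhood $V_q\subset N$ of $q$ together with vector fields $u_q$ on $f^{-1}(V_q)$ and $v_q$ on $V_q$ satisfying $w=df(u_q)+v_q(f)$ on $f^{-1}(V_q)$. Since $N$ is paracompact, choose a locally finite refinement of $\{V_q\}$ with a subordinate smooth partition of unity $\{\rho_q\}$ on $N$, and set
\begin{equation*}
v=\sum_q \rho_q\,v_q,\qquad u=\sum_q (\rho_q\circ f)\,u_q.
\end{equation*}
Both sums are locally finite, so $u$ and $v$ are smooth, and
\begin{equation*}
df(u)+v(f)=\sum_q (\rho_q\circ f)\bigl(df(u_q)+v_q(f)\bigr)=\Bigl(\sum_q\rho_q\circ f\Bigr)w=w.
\end{equation*}

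\textbf{Step 2 (reduction to the critical locus of a fibre).} It remains to produce $(u_q,v_q)$ for each $q\in N$. Since $f$ is proper, $f(M)$ is closed in $N$, so for $q\notin f(M)$ we may take $V_q\subset N\setminus f(M)$ and $u_q=v_q=0$. For $q\in f(M)$, the fibre $f^{-1}(q)$ is non-empty and compact. Let $C_q\subset f^{-1}(q)$ be the set of points where $df$ fails to be surjective. At any $p\in f^{-1}(q)\setminus C_q$, the differential $df_p$ is surjective, so in a neighbourhood of $p$ the equation $df(u)=w-v(f)$ admits a smooth local solution for \emph{any} vector field $v$ defined near $q$; hence the only genuine obstruction to producing $(u_q,v_q)$ lies along $C_q$.

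\textbf{Step 3 (applying the hypothesis on $C_q$).} The hypothesis at singletons $S=\{p\}$, combined with Theorem~\ref{th:3}, implies that every germ of $f$ is stable, rigidifying the local structure around each $p\in C_q$. Using this together with compactness of $C_q$, select a finite subset $S=\{p_1,\dots,p_k\}\subset C_q$ together with small open neighbourhoods $W_1,\dots,W_k$ of the $p_i$ that cover $C_q$. Apply the hypothesis at $S$ to obtain a common germ $v_q$ at $q$ and germs $u^{(i)}$ at each $p_i$ solving $w=df(u^{(i)})+v_q(f)$. Represent $v_q$ by an actual vector field on a small $V_q$; on $f^{-1}(V_q)\setminus\bigcup W_i$, where $df$ is surjective, solve $df(u)=w-v_q(f)$ locally; and glue the germ solutions $u^{(i)}$ on the $W_i$ to these local solutions via a partition of unity on $M$ subordinate to the cover $\{W_i\}\cup\{f^{-1}(V_q)\setminus\bigcup\overline{W_i'}\}$ for some $\overline{W_i'}\subset W_i$.

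\textbf{Main obstacle.} The difficulty is Step 3: the hypothesis only supplies solutions at \emph{finite} $S\subset f^{-1}(q)$, whereas the fibre $f^{-1}(q)$ (and even $C_q$) may be positive-dimensional. One must argue that a suitable finite $S$ "dominates" $C_q$ in the sense that the single germ $v_q$ extracted from the hypothesis remains compatible with the equation everywhere on $f^{-1}(q)$, and that the patching between the germ-solution region near $S$ and the $df$-surjective region preserves this common $v_q$ while producing a smooth $u_q$. The stability of each germ (via Theorem~\ref{th:3}) furnishes the local normal forms that make the finite covering and the subsequent gluing feasible.
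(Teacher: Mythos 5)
Your Steps 1 and 2 are sound and match the paper's strategy (gluing on $N$ is legitimate because the equation $w=df(u)+v(f)$ is affine in $(u,v)$ for the pullback weights $\rho_q\circ f$, and the only obstruction sits over the singular points). But the argument is incomplete at precisely the point you flag as the ``main obstacle,'' and the resolution is not the vague domination statement you gesture at: the hypothesis itself forces $C_q$ to be \emph{finite}, with at most $\dim N$ elements. Indeed, take any finite set $S=\{p_1,\dots,p_s\}$ of singular points in $f^{-1}(q)$ and evaluate the germ equations $w_k=df(u_k)+v(f)$ at the points $p_k$: given arbitrary prescribed values $w_k(p_k)=v_k\in T_qN$, the element $z=v(q)$ satisfies $v_k-z\in P_k:=d_{p_k}f(T_{p_k}M)$ for all $k$, which by Lemma~\ref{l:1.1}(5) says the subspaces $P_k$ are in general position. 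Since each $P_k$ is a proper subspace at a singular point, general position forces $\sum_k\mathrm{codim}\,P_k\le\dim N$, hence $s\le\dim N$. So every finite subset of $C_q$ has at most $\dim N$ points, whence $C_q$ itself is finite. Without this observation your Step 3 genuinely fails: if $C_q$ were positive-dimensional, the points of $C_q\setminus S$ would still be singular, the equation $df(u)=w-v_q(f)$ could not be solved near them for the fixed $v_q$, and no choice of finite $S$ would ``cover'' the obstruction.

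Once finiteness of $C_q$ is in hand, your Step 3 closes as you describe: take $S=C_q$ itself, apply the hypothesis to get a single germ $v_q$ at $q$ and germs $u^{(i)}$ at the $p_i$, and use properness to shrink $V_q$ so that $f^{-1}(V_q)$ is contained in the union of the $W_i$ and the set where $df$ is surjective; the gluing of the $u$'s for the fixed $v_q$ is again an affine combination of solutions. This is exactly the paper's argument (it also records the case $\dim M<\dim N$ separately, where every point of the fibre contributes a proper subspace and the same general-position count bounds the whole fibre). So the missing ingredient is a single application of Lemma~\ref{l:1.1}; everything else in your outline is correct.
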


\begin{proof}[Sketch of the proof.] This is a version of \cite[Theorem V.1.6]{GG}. 

Recall that a point $x\in M$ is said to be a \emph{singular point} of  $f\co M\to N$ if $\mathop\mathrm{rank}(d_pf)<\min(\dim M, \dim N)$. 
If $f$ is infinitesimally stable at $S$, then the planes $P_i=df(T_{p_i}M)$ are in general position, and in particular, the set $S$ consists of at most $\dim N$ singular points of $f$. Indeed, by Lemma~\ref{l:1.1}, we need to show that for any given $v_1,\ldots, v_k\in T_qN$, there is $z\in T_qN$ such that $v_i-z$ is in $P_i$ for each $i$. Choose germ vector fields $w_i$ along $f$ at $p_i$ such that $w_i(p_i)=v_i$. For each $i=1,\ldots, s$, we have $w_i=df(u_i)+v(f)$ for  some germ vector fields $u_i$ at $p_i$ and $v$ at $q$. Then $z=v(q)$ satisfies the required property. 

Let $w$ be a given vector field along $f$. We aim to construct vector fields $u$ on $M$ and $v$ on $N$ such that $w=df(u)+v(f)$. We will assume $\dim M\ge \dim N$ as the case where $\dim M<\dim N$ is similar but easier. Let $\Sigma$ denote the set of singular points of $f$. 
Given a point $q$ in the image $f(\Sigma)$, we know that $\Sigma_q=f^{-1}(q)\cap \Sigma$ is finite. 
Therefore there are a neighborhood $W(q)$ of $q$, a neighborhood $U(q)$ of $\Sigma_q$, a 
vector field $u''$ over $U(q)$, and $v''$ over $W(q)$ such that $w|U(q)=df(u'')+v''(f)$.  
Now we may use partition of unity to construct vector fields $u'$ in a neighborhood of $\Sigma$, and $v'$ in a neighborhood of $f(\Sigma)$ such that $w=df(u')+v'(f)$ over the domain of definition. Then we extend $v'$ to a desired vector field $v$ over $N$ arbitrarily, and then extend $u'$ to a desired vector field $u$ over $M$ so that $df(u)=w-v(f)$. The latter is possible since the restriction of $f$ to $M\setminus \Sigma$ is a submersion. 
\end{proof}

\subsection{Mather's Lemma}

We recall an important result of Mather from his earlier paper \cite{MaIII}, which follows from the Malgrange preparation theorem and the Nakayama lemma. Let $f\co M\to N$ be a smooth map of manifolds of dimensions $m$ and $n$ respectively, and let $S=(p_1,\ldots, p_s)$ be a finite subset of $f^{-1}(q)$ for some point $q$.  For each $\ell$, let $f^\ell$ denote the map germ of $f$ at $p_\ell$, expressed in local coordinates $x^\ell$ centered at $p_\ell$, and local coordinates $y=(y_1,\ldots, y_n)$ centered at $q$. 
Let 
\begin{itemize}
\item $C(N)_q$ denote the ring of germs of smooth functions on $N$ at $q$,  
\item $C(M)_S=\{(g^{(1)}(x^1),\ldots, g^{(s)}(x^s))\}$ denote the product ring of germs of smooth functions on $M$ at $S$, where each $g^{(\ell)}$ is a germ of a smooth function on $M$ at $p_\ell$, 
\item $A$ denote the finitely generated $C(N)_q$-module of germ vector fields on $N$ at $q$, 
\item $B$ denote the finitely generated $C(M)_S$-module of germ vector fields on $M$ at $S$, and 
\item $C$ denote the finitely generated $C(M)_S$-module of germ vector fields along $f$ at $S$.
\end{itemize}
There are homomorphisms $tf\co B\to C$, and $wf\co A\to C$ defined by $tf(\xi)=df(\xi)$ and $wf(\eta)=\eta\circ f$. Mather's Lemma \cite[p.134]{MaIII} asserts that 
\[
   tf(B) + wf(A) + f^*(\mathfrak{m}_q)C = C
\qquad \text{implies} \qquad 
   tf(B) + wf(A) = C,
\]
where $\mathfrak{m}_q$ stands for the maximal ideal of germ functions $z$ at $q$ vanishing at $q$. Suppose that 
$f^\ell(x^\ell)=(f^\ell_1(x^\ell), \ldots, f^\ell_n(x^\ell))$ is a coordinate representation of $f^\ell$. Then a general element in the ideal $f^*(\mathfrak{m}_q)$ 
is  
\[
\left(\sum_j g^{(1)}_j(x^1)\cdot z_j\left(f^{(1)}_1(x^1), \ldots, f^{(1)}_n(x^1)\right), \ldots, \sum_j g^{(s)}_j(x^s)\cdot z_j\left(f^{(s)}_1(x^s),\ldots,  f^{(s)}_n(x^s)\right)\right), 
\]
where $z_j$ is a map germ at $q$ vanishing at $q$, and $g^\ell_j$ is a map germ at $p_\ell$.
In particular, Mather's Lemma implies that if for every germ vector field $w^\ell=\sum w_i^\ell(x^\ell)\partial_iy$ along $f$, there are germ vector fields $u^\ell=\sum u_i^\ell(x^\ell)\partial_ix^\ell$, $v=\sum v_i(f_1^\ell(x^\ell),\ldots, f_n^\ell(x^\ell))\partial_iy$,  $t^\ell=\sum t_i^\ell(x^\ell)\partial_iy$, function germs $(g_{ij}^{(1)}(x^1), \ldots, g_{ij}^{(s)}(x^s))$ in $C(M)_S$, and  function germs $f^*z_{ij}$ in $f^*\mathfrak{m}_q$ such that 
\[
df_i^\ell(u_1^\ell,\ldots, u_m^\ell) + v_i(f_1^\ell,\ldots, f_n^\ell) + \left[\sum_j  g_{ij}^\ell z_{ij}(f_1^\ell,\ldots, f_n^\ell)\right] t_i^\ell =w_i^\ell \qquad \text{for all $i$ and $\ell$},
\]
then the system
\[
df_i(u_1, \ldots, u_m) + v_i(f_1,\ldots, f_n)  =w_i^\ell \qquad\text{for all $i$ and $\ell$}
\]
can also be solved for all $w$ in terms of $u$ and $v$.

\section{Proof of Theorem~\ref{th:1}}\label{s:4}

The argument presented here is essentially an adaptation of the proof of \cite[Theorem VII.6.4]{GG}.  

Every stable map satisfies the Mather normal crossing condition by the Multijet Transversality Theorem, e.g., see \cite[Theorem 4.6.1 and Lemma 4.6.5]{Wa} or \cite[Lemma V.6.3]{GG}.  Thus, to prove Theorem~\ref{th:1}, it suffices to show that 
if a smooth map $f$ satisfies the Mather normal crossing condition, then it is infinitesimally stable at every finite subset of $f^{-1}(q)$ for every $q\in N$, as in view of Theorem~\ref{th:2} and Theorem~\ref{th:inf}, this implies that the three conditions in the Mather stability theorem are equivalent.

Suppose $f$ satisfies the Mather normal crossing condition. Let $S\subset f^{-1}(q)$ be a finite set of points for some $q\in N$ with $|S|>1$. Let $m$ and $n$ denote the dimensions of $M$ and $N$ respectively. Suppose that $m\ge n$. Then $S$ consists of the set $\{p_1,\ldots, p_s\}$ of singular points of $f$, and the set $\{p_{s+1},\ldots, p_{s+s'}\}$ of regular points of $f$. The differential $df$ of $f$ at any regular point $p_k$ in $S$ is surjective, which  implies that the infinitesimal stability equation $df(u_k)+v(f)=w_k$ in (\ref{eq:stab}) is always solvable for $u_k$ given $v$ and $w_k$. Consequently, infinitesimal stability of $f$ at $S$ is equivalent to infinitesimal stability of $f$ at the subset $\{p_1,\ldots, p_s\}$ of its singular points. Thus, in the case $m\ge n$ we may assume that $S$ consists only of singular points  $\{p_1,\ldots, p_{s}\}$.  In the case $m<n$, we let $S=\{p_1,\ldots, p_s\}$ contain both singular and regular points. 

Let $p_\ell$ be a point in $S$, and let $P_\ell$ denote the linear subspace $df(T_{p_\ell}\Sigma_{p_\ell}(f))$ of $T_qN$. Let $i_\ell$ denote the codimension of $P_\ell$.  
Then there are coordinates $(y_1^{(\ell)},\ldots, y_n^{(\ell)})$ on $N$ \emph{centered} at $q$, i.e., with $y^{(\ell)}_i(q)=0$ for all $i$, such that $P_\ell$ is given by
\[
dy_1^{(\ell)}=\cdots = dy_{i_\ell}^{(\ell)} = 0.
\] 
Put $I_1=\{1,\ldots, i_1\}$, $I_k=\{i_1+\ldots+i_{k-1}+1,\ldots, i_1+\ldots+i_k\}$ for $k=2,\ldots, s$, and $\bar{I}_\ell= \{1,\ldots, n\}\setminus I_\ell$ for $\ell=1,\ldots, s$. Since the subspaces $P_1$ and $P_2$ are transverse, the set of $i_1+i_2$ differentials 
$dy_1^{(1)}, \ldots, dy_{i_1}^{(1)}, dy_1^{(2)},\ldots, dy_{i_2}^{(2)}$  is  linearly independent, and therefore the functions $y_1^{(1)}, \ldots, y_{i_1}^{(1)}, y_1^{(2)},\ldots, y_{i_2}^{(2)}$  can be taken as the first $i_1+i_2$ coordinates near $q$.  
Similarly, 
since the subspaces $P_3$ and $P_1\cap P_2$ are transverse, we conclude that the functions $y^{(1)}_1,\ldots, y_{i_1}^{(1)}, y^{(2)}_1,\ldots, y^{(2)}_{i_2}$, $y^{(3)}_1,\ldots, y^{(3)}_{i_3}$
can be taken as the first $i_1+i_2+i_3$ coordinates. Continuing by induction (at the $k$-th step using transversality of subspaces $P_k$ and $P_1\cap \ldots \cap P_{k-1}$), we can find local coordinates $\{y_1, \ldots, y_{n}\}$ about $q$ such that each $P_\ell$ is given by the equations $dy_i=0$ for all $i\in I_\ell$. Then the restrictions of functions $\{y_i\}$ with $i\in \bar{I}_\ell$ serve as local coordinates on $P_\ell$ for $\ell=1,\ldots, s$.

Next, on $M$ there are coordinates $x^\ell=(x_1^{\ell},\ldots, x_m^{\ell})$ centered at $p_\ell$ for $\ell= 1,\ldots, s$ such that the map germ of $f$ at $p_\ell$ is given by 
\begin{equation}\label{eq:normal_form}
[y_k\circ f]_{p_\ell}=
\begin{cases}
	f_k^{(\ell)}(x_1^\ell,\ldots, x_m^\ell) & \text{if $k\in I_\ell$,}  \\
	x_{\sigma_\ell(k)}^\ell  & \text{if 
		$k\in \bar{I}_\ell$},
\end{cases}
\end{equation}
e.g., see \cite[p.161]{AVGZ}, 
where  $\sigma_\ell\co \bar{I}_\ell\to \{1,\ldots, m\}$ is an injective map of sets. Put $L_\ell=\{1,\ldots, m\}\setminus \sigma_\ell(\bar{I}_\ell)$. 
We need to show that for any germ vector fields $w^{\ell}(x^{\ell})=\sum_{i=1}^n w_i^{\ell}(x^\ell)\frac{\partial}{\partial y_i}$ along $f$ at $p_\ell$ for all $\ell= 1,\ldots, s$, there are germ vector fields $u^{\ell}(x^\ell)=\sum_{i=1}^m u_i^\ell(x^\ell)\frac{\partial}{\partial x_i^\ell}$ on $M$ at $p_\ell$, as well as a germ vector field $v(y)=\sum_{i=1}^n v_i(y)\frac{\partial}{\partial y_i}$ at $q$ on $N$ such that there are equalities of germs 
$df^{(\ell)}(u^\ell)+v(f^{(\ell)}) = w^\ell$
at $p_\ell$. In other words, we need to show that for any germ functions $w_i^\ell$ at $p_\ell$, there are germ functions $u_i^\ell$ at $p_\ell$, as well as germ functions  $v_i$ at $q$ such that
\begin{equation}\label{eq:m1}
	\left\{
	\begin{aligned}
		\frac{\partial f^{(\ell)}_k}{\partial x_1^\ell}u_1^\ell + \cdots + \frac{\partial f_k^{(\ell)}}{\partial x_{m}^\ell}u_{m}^\ell  + v_k(f_1^{(\ell)},\ldots, f_n^{(\ell)}) &= w_k^\ell  \qquad \text{if $k\in I_\ell$}, \\ 
		u_{\sigma_\ell(k)}^\ell  + v_k(f^{(\ell)}_1,\ldots, f^{(\ell)}_n) &= w_k^\ell  \qquad \text{if 
		$k\in \bar{I}_\ell$}.
	\end{aligned}
	\right.
\end{equation}

Put $c_k=v_k(0, \ldots, 0)$ for $k = 1, \ldots, n$. Solving the equations of (\ref{eq:m1}) indexed by $k\in \bar{I}_\ell$ for $u^\ell_{\sigma_\ell(k)}$ and substituting into the equations indexed by $k\in I_\ell$  yields the equivalent system:
\begin{equation}\label{eq:m3a}
	\left\{
	\begin{aligned}
		\sum_{i\in L_\ell}\frac{\partial f^{(\ell)}_k}{\partial x_i^\ell}\tilde u_i^\ell + \tilde v_k(f^{(\ell)}_1,\ldots, f^{(\ell)}_n)- \sum_{j\in \bar{I}_\ell} \frac{\partial f^{(\ell)}_{k}}{\partial x_{\sigma_\ell(j)}^\ell}\tilde v_j(f^{(\ell)}_1,\ldots, f^{(\ell)}_n) &= \tilde{w}_k^\ell  \qquad \text{if $k\in I_\ell$}, \\ 
		\tilde u_{\sigma_\ell(k)}^\ell  + \tilde v_k(f^{(\ell)}_1,\ldots, f^{(\ell)}_n) &= \tilde{w}_{k}^\ell  \qquad \text{if 
		$k\in \bar{I}_\ell$},
	\end{aligned}
	\right.
\end{equation}
where $\tilde v_k(y)=v_k(y)-c_k$ for $k\le n$, $\tilde u^\ell_i=u^\ell_i$ for $i\in L_\ell$, and $\tilde u^\ell_{\sigma_\ell(k)}=u^\ell_{\sigma_\ell(k)}+c_k$ for $k\in \bar{I}_\ell$, and
\[
\tilde w^\ell_k=
\begin{cases}
	w^\ell_k  & \text{if $k\in \bar{I}_\ell$,}  \\
	w_k^\ell - \sum_{j\in \bar{I}_\ell} \frac{\partial f^{(\ell)}_k}{\partial x_{\sigma_\ell(j)}^\ell}w_j^\ell-c_k+\sum_{i\in \bar{I}_\ell} c_i\frac{\partial f^{\ell}_k}{\partial x^\ell_{\sigma_\ell(i)}} &  \text{if $k\in I_\ell$.}
\end{cases}
\]

Since $\tilde v_k$ belongs to the maximal ideal $\frak{m}_q$ of function germs vanishing at $0$, by Mather's lemma \cite[p. 134]{MaIII}, the system (\ref{eq:m3a}) can be solved for all $\tilde{w}_i^\ell$ if and only if the \emph{reduced infinitesimal stability equations} (\ref{eq:m7}) can be solved for all $\tilde{w}_i^\ell$:  
\begin{equation}\label{eq:m7}
	\left\{
	\begin{aligned}
		\sum_{i\in L_\ell}\frac{\partial f^{(\ell)}_k}{\partial x_i^\ell}\tilde u_i^\ell + \tilde v_k(f^{(\ell)}_1,\ldots, f^{(\ell)}_n) &= \tilde{w}_k^\ell  \qquad \text{if $k\in I_\ell$}, \\ 
		\tilde u_{\sigma_\ell(k)}^\ell  + \tilde v_k(f^{(\ell)}_1,\ldots, f^{(\ell)}_n) &= \tilde{w}^\ell_k  \qquad \text{if $k\in \bar{I}_\ell$}.
	\end{aligned}
	\right.
\end{equation}
To summarize, $f$ is infinitesimally stable at $S$ if and only if for all germ functions $\tilde{w}_i^\ell$ at $p_\ell$, there are germ functions $u_i^\ell$  and $v_k$ that solve the reduced infinitesimal stability equations (\ref{eq:m7}).  Since $f$ is infinitesimally stable at $p_\ell$, for the germ functions $\tilde{w}_i^\ell(x^\ell)$ at $p_\ell$, there are germ functions $a_i^\ell(x^\ell)$ at $p_\ell$, and $b_k^\ell(y)$ at $q$ such that the reduced infinitesimal stability equations hold:

\[
\left\{
\begin{aligned}
	\sum_{i\in L_\ell}\frac{\partial f^{(\ell)}_k}{\partial x_i^\ell}a^\ell_i + b^\ell_k(f_1^{(\ell)},\ldots, f^{(\ell)}_n) &= \tilde{w}_k^\ell  \qquad \text{if $k\in I_\ell$}, \\ 
	a_{\sigma_\ell(k)}^\ell  + b_k^\ell(f^{(\ell)}_1,\ldots, f^{(\ell)}_n) &= \tilde{w}_k^\ell  \qquad \text{if $k\in \bar{I}_\ell$}.
\end{aligned}
\right.
\]

We define the germ functions $v_i$ at $q$ on $N$, and then define the germ functions $u_i$  on $M$ by 
\[
\tilde v_k=
\begin{cases}
	b_k^{\ell} & \text{if $k\in I_{\ell}$,}  \\
    0 & \text{if $k=i_1+\cdots + i_s+1, \ldots, n$},
\end{cases}
\qquad 
\tilde u_i^\ell=
\begin{cases}
	\tilde{w}_j^\ell-\tilde v_j(f^\ell)  & \text{if 
		$i=\sigma_\ell(j)$ with $j\in \bar{I}_\ell$}, \\
        a_i^\ell & \text{if $i\in L_\ell$.} 
\end{cases}
\] 

The functions $\tilde v_k$ are well-defined since the sets $I_\ell$ are disjoint and $\cup I_\ell=\{1,\ldots, i_1+\cdots +i_s\}$. The functions $\tilde{u}^\ell_i$ are well-defined since $L_\ell$ is disjoint from $\sigma_\ell(\bar{I}_\ell)$ and the union of $L_\ell$ and $\sigma_\ell(\bar{I}_\ell)$ is the set $\{1,\ldots, m\}$.
The so defined germ functions $\tilde v_i$ and $\tilde u_i^\ell$ solve the required system of equations (\ref{eq:m7}) for any germ functions $\tilde w_i^\ell$.

\end{document}